\begin{document}

\title{Dual Quaternion Matrices in Multi-Agent Formation Control}
\author{ Liqun Qi\footnote{
    Department of Mathematics, School of Science, Hangzhou Dianzi University, Hangzhou 310018 China
    ({\tt maqilq@polyu.edu.hk}).}
    \and  \
    Xiangke Wang\thanks{College of Mechatronics and Automation, National University of Defence Technology, Changsha, 410073, China; ({\tt
xkwang@nudt.edu.cn}).  
}
     \and and \
    Ziyan Luo\footnote{Corresponding author, Department of Mathematics,
  Beijing Jiaotong University, Beijing 100044, China. ({\tt zyluo@bjtu.edu.cn}). This author's work was supported by Beijing Natural Science Foundation (Grant No. Z190002).}
}
\date{\today}
\maketitle

\begin{abstract}
Three kinds of dual quaternion matrices associated with the mutual visibility graph, namely the relative configuration adjacency matrix, the logarithm adjacency matrix and the relative twist adjacency matrix,  play important roles in multi-agent formation control.   In this paper, we study their properties and applications.   We show that the relative configuration adjacency matrix and the logarithm adjacency matrix are both Hermitian matrices, and thus have very nice spectral properties.   We introduce dual quaternion Laplacian matrices, and prove a Gershgorin-type theorem for square dual quaternion Hermitian matrices, for studying properties of dual quaternion Laplacian matrices.  The role of the dual quaternion Laplacian matrices in formation control is discussed.

\medskip


  \textbf{Key words.} Unit dual quaternions, formation control, dual quaternion matrices, dual quaternion Hermitian matrices, eigenvalues.

\end{abstract}

\renewcommand{\Re}{\mathds{R}}
\newcommand{\rank}{\mathrm{rank}}
\renewcommand{\span}{\mathrm{span}}
\newcommand{\X}{\mathcal{X}}
\newcommand{\A}{\mathcal{A}}
\newcommand{\I}{\mathcal{I}}
\newcommand{\B}{\mathcal{B}}
\newcommand{\C}{\mathcal{C}}
\newcommand{\OO}{\mathcal{O}}
\newcommand{\e}{\mathbf{e}}
\newcommand{\0}{\mathbf{0}}
\newcommand{\dd}{\mathbf{d}}
\newcommand{\ii}{\mathbf{i}}
\newcommand{\jj}{\mathbf{j}}
\newcommand{\kk}{\mathbf{k}}
\newcommand{\va}{\mathbf{a}}
\newcommand{\vb}{\mathbf{b}}
\newcommand{\vc}{\mathbf{c}}
\newcommand{\vg}{\mathbf{g}}
\newcommand{\vr}{\mathbf{r}}
\newcommand{\vt}{\rm{vec}}
\newcommand{\vx}{\mathbf{x}}
\newcommand{\vy}{\mathbf{y}}
\newcommand{\vu}{\mathbf{u}}
\newcommand{\vv}{\mathbf{v}}
\newcommand{\y}{\mathbf{y}}
\newcommand{\vz}{\mathbf{z}}
\newcommand{\T}{\top}

\newtheorem{Thm}{Theorem}[section]
\newtheorem{Def}[Thm]{Definition}
\newtheorem{Ass}[Thm]{Assumption}
\newtheorem{Lem}[Thm]{Lemma}
\newtheorem{Prop}[Thm]{Proposition}
\newtheorem{Cor}[Thm]{Corollary}
\newtheorem{example}[Thm]{Example}
\newtheorem{remark}[Thm]{Remark}

\section{Introduction}
In 1873, Clifford \cite{Cl73} introduced dual quaternions.   Dual quaternions now have wide applications in robotics, 3D motion modelling and control, and computer graphics \cite{ACVL17, BK20, BLH19, CKJC16, Da99, HY03, Ke12, LLB13, PSG19, WYL12}.

In 2011, Wang \cite{Wx11} proposed the study of dual quaternion matrices in his research on formation control in 3D space.   In \cite{WYZ}, Wang and his co-authors studied the application of dual quaternion matrices in the multiple rigid-bodies rendezvous problem \cite{AOSY99}, and proposed three dual quaternion matrices: the relative configuration adjacency matrix, the logarithm adjacency matrix and the relative twist adjacency matrix.   
In \cite{QL22}, Qi and Luo studied the spectral properties of dual quaternion matrices.  If a dual number is a right eigenvalue of a square dual quatrernion matrix, then it is also a left eigenvalue of that matrix.   They call such a right eigenvalue an eigenvalue.    They showed that an $n \times n$ dual quaternion Hermitian matrix has exactly $n$ eigenvalues and no other right eigenvalues.   Such a matrix is positive semi-definite or positive definite if and only if its eigenvalues are all nonnegative or positive, respectively, in the sense of \cite{QLY22}.   Based upon this, a singular value decomposition theorem for general dual quaternion matrices was established.    Dual quaternion matrices were also studied in \cite{LQY22}.

In this paper, we show that the relative configuration adjacency matrix and the logarithm adjacency matrix
are dual quaternion Hermitian matrices, propose a dual quaternion Laplacian matrix theory for the multi-robot rendezvous problem, and study the properties and application in formation control, of dual quaternion Laplacian matrices.

The rest of this paper is distributed as follows.   Preliminary knowledge on dual numbers, quaternions, dual quaternions and dual quaternion matrices, is given in Section 2.  In Section 3, we show that the relative configuration adjacency matrix and the logarithm adjacency matrix
are dual quaternion Hermitian matrices, and the sum of the logarithms of relative configurations over any cycle vanishes to zero.    Dual quaternion Laplacian matrices were introduced in Section 4.  A Gershgorin-type theorem for dual quaternion Hermitian matrices was given there for the discussion of properties of dual quaternion Laplacian matrices.   The role of the dual quaternion Laplacian matrices in formation control is discussed in Section 5.

We denote scalars, vectors and matrices by small letters, bold small letters and capital letters, respectively.   Dual numbers and dual quaternions are distinguished by a hat symbol.

\section{Preliminaries}

\subsection{Dual Numbers}

Denote $\mathbb R$ and $\hat {\mathbb R}$ as the set of the real numbers, and the set of the dual numbers, respectively.  Following the literature such as \cite{WYL12}, we use the hat symbol to denote dual numbers and dual quaternions.   A dual number $\hat q$ has the form $\hat q = q + q_d\epsilon$, where $q$ and $q_d$ are real numbers,  and $\epsilon$ is the infinitesimal unit, satisfying $\epsilon^2 = 0$.   We call $q$ 
the standard part of $\hat q$, and $q_d$ the dual part or the infinitesimal part of $\hat q$.  The infinitesimal unit $\epsilon$ is commutative in multiplication with real numbers, complex numbers and quaternion numbers.  The dual numbers form a commutative algebra of dimension two over the reals. If $q \not = 0$, we say that $\hat q$ is appreciable, otherwise, we say that $\hat q$ is infinitesimal.  Note that a real number is a dual number with a zero dual part.   Then the dual zero is still $0$, and the dual identity is still $1$.

In \cite{QLY22}, a total order was introduced for dual numbers.  Given two dual numbers $\hat p, \hat q \in \hat {\mathbb R}$, $\hat p = p + p_d\epsilon$, $\hat q = q + q_d\epsilon$, where $p$, $p_d$, $q$ and $q_d$ are real numbers, we say that $\hat p \le \hat q$, if either $p < q$, or $p = q$ and $p_d \le q_d$.  In particular, we say that $\hat p$ is positive, nonnegative, nonpositive or negative, if $\hat p > 0$, $\hat p \ge 0$, $\hat p \le 0$ or $\hat p < 0$, respectively.


\subsection{Quaternions}
We adopt the notation $\mathbb Q$ to denote the set of the quaternions. A quaternion $q$ has the form
$q = q_0 + q_1\ii + q_2\jj + q_3\kk,$
where $q_0, q_1, q_2$ and $q_3$ are real numbers, $\ii, \jj$ and $\kk$ are three imaginary units of quaternions, satisfying
$$\ii^2 = \jj^2 = \kk^2 =\ii\jj\kk = -1, ~~\ii\jj = -\jj\ii = \kk, ~~ \jj\kk = - \kk\jj = \ii, ~~\kk\ii = -\ii\kk = \jj.$$
The real part of $q$ is Re$(q) = q_0$.   The imaginary part of $q$ is Im$(q) = q_1\ii + q_2\jj +q_3\kk$.
The multiplication of quaternions satisfies the distribution law, but is noncommutative.

The conjugate of $q = q_0 + q_1\ii + q_2\jj + q_3\kk$ is
$q^* := q_0 - q_1\ii - q_2\jj - q_3\kk.$
The magnitude of $q$ is
$|q| = \sqrt{q_0^2+q_1^2+q_2^2+q_3^2}.$
It follows that the inverse of a nonzero quaternion $q$ is given by
$q^{-1} = {q^* / |q|^2}.$ For any two quaternions $p$ and $q$, we have $(pq)^* = q^*p^*$.

A quaternion is called imaginary if its real part is zero.  If $q$ is imaginary, then $q^* = -q$.  In the engineering literature \cite{WYL12}, it is called a vector quaternion.   Various 3D vectors, such as position vectors, displacement vectors, linear velocity vectors, and angular velocity vectors, can be represented as imaginary quaternions.   As 3D vectors have cross product, imaginary quaternions can also have cross products.  That is, the cross product of two imaginary quaternions is an imaginary quaterion, with its imaginary part is the cross product of the two original imaginary quaternions.

If $|q|=1$, then $q$ is called a unit quaternion, or a rotation quaternion.  A spatial rotation around a fixed point of $\theta$  radians about a unit axis $(x_1,x_2,x_3)$ that denotes the Euler axis is given by the unit quaternion $q=(\cos(\theta /2),x_1\sin(\theta /2),x_2\sin(\theta /2),x_3\sin(\theta /2))$, i.e.,
\begin{equation} \label{uq}
q = \cos(\theta /2) + x_1\sin(\theta /2)\ii + x_2\sin(\theta /2)\jj + x_3\sin(\theta /2)\kk = e^{{\theta \over 2}x}.
\end{equation}
where the unit axis $x$ is an imaginary unit quaternion $x = x_1\ii + x_2\jj + x_3\kk$.   Thus, we may also write
\begin{equation} \label{lnuq}
\ln q = {\theta \over 2}x.
\end{equation}
Note that a unit quaternion $q$ is always invertible and $q^{-1} = q^*$. Furthermore, since in general
it is possible that $pq \not = qp$, it is also possible $\ln pq \not = \ln p + \ln q$.

Two quaternions $p$ and $q$ are said to be similar if there is a nonzero quaternion $u$ such that $p = u^{-1}qu$.   We denote $p \sim q$.    It is easy to check that $\sim$ is an equivalence relation on the quaternions.   Denote by $[q]$ the equivalence class containing $q$. Then $[q]$ is a singleton if and only if $q$ is a real number.

Denote the collection of $n$-dimensional quaternion  vectors by ${\mathbb {Q}}^n$. For $\vx = (x_1, x_2,\cdots, x_n)^\top, \vy = (y_1, y_2,\cdots, y_n)^\top  \in {\mathbb {Q}}^n$, define $\vx^*\vy = \sum_{i=1}^n x_i^*y_i$, where $\vx^* = (x_1^*, x_2^*,\cdots, x_n^*)$ is the conjugate transpose of $\vx$.

\subsection{Dual Quaternions}

Denote the set of dual quaternions by $\hat {\mathbb Q}$.   A dual quaternion $\hat q \in \hat {\mathbb Q}$ has the form
\begin{equation} \label{dq}
\hat q = q + q_d\epsilon,
\end{equation}
where $q, q_d \in \mathbb {Q}$ are the standard part and the dual part of $\hat q$, respectively. If $q \not = 0$, then we say that $\hat q$ is appreciable.  If $q$ and $q_d$ are imaginary quaternions, then $\hat q$ is called an imaginary dual quaternion. Here, the infinitesimal unit $\epsilon$ is commutative with each of those three imaginary units of quaternions. Thus, $q_d\epsilon= \epsilon q_d$ for any $q_d \in \mathbb {Q}$.


The conjugate of $\hat q$ is
\begin{equation} \label{conjugate}
\hat q^* = q^* + q_d^*\epsilon.
\end{equation}
By this, if $\hat q = \hat q^*$, then $\hat q$ is a dual number.  If $\hat q$ is imaginary, then $\hat q^* = - \hat q$.

The magnitude of $\hat q$ was defined in \cite{QLY22} as
\begin{equation} \label{magnitude}
|\hat q| := \left\{ \begin{aligned} |q| + {(qq_d^*+q_d q^*) \over 2|q|}\epsilon, & \ {\rm if}\  q \not = 0, \\
|q_d|\epsilon, &  \ {\rm otherwise},
\end{aligned} \right.
\end{equation}
which is a dual number.

Suppose that we have two dual quaternions $\hat p = p + p_d \epsilon$ and $\hat q = q + q_d \epsilon$, their addition and multiplications are defined as
$$\hat p+\hat q = \left(p + q\right) + \left(p_d + q_d\right)\epsilon$$
and
$$\hat p\hat q = pq + \left(pq_d + p_d q\right)\epsilon.$$
See \cite{Ke12, LLB13}. Note that under these arithmetic rules, a dual number is commutative with a dual quaternion or a dual quaternion vector.

A dual quaternion $\hat q$ is called invertible if there exists a quaternion $\hat p$ such that $\hat p\hat q = \hat q\hat p =1$.  We can derive that $\hat q$ is invertible if and only if  $\hat q$ is appreciable. In this case, we have
$$\hat q^{-1} = q^{-1} - q^{-1}q_d q^{-1} \epsilon.$$


If $|\hat q| = 1$, then $\hat q$ is called a unit dual quaternion. A unit dual quaternion $\hat q$ is always invertible and we have ${\hat q}^{-1} = {\hat q}^*$.   The 3D motion of a rigid body can be represented by a unit dual quaternion.
We have
$$\hat q\hat q^* = (q + q_d\epsilon)(q^* + q_d^*\epsilon)= qq^* + (qq_d^* + q_d q^*)\epsilon = \hat q^*\hat q.$$
Thus, $\hat q$ is a unit dual quaternion if and only if $q$ is a unit quaternion, and
\begin{equation} \label{udq1}
qq_d^* + q_d q^* = q^*q_d + q_d^* q=0.
\end{equation}
For example, $\hat q = \ii + \jj \epsilon$ is a unit dual quaternion. Suppose that there is a rotation $q \in {\mathbb Q}$ succeeded by a translation $p^b \in {\mathbb Q}$, where $p^b$ is an imaginary quaternion.   Here, following \cite{WYL12}, we use superscripts $b$ and $s$ to represent the relation of the rigid body motion with respect to the body frame attached to the rigid body and the spatial frame which is relative to a fixed coordinate frame.
Then the whole transformation can be represented using unit dual quaternion $\hat q = q + q_d \epsilon$, where $q_d = {1 \over 2}qp^b$.   Note that we have
$$qq_d^* + q_d q^* = {1 \over 2}\left[q(p^b)^*q^* + qp^bq^*\right] = {1 \over 2}q\left[(p^b)^*+p^b\right]q^* = 0.$$
Thus, a transformation of a rigid body can be represented by a unit dual quaternion
\begin{equation} \label{udq}
\hat q = q + {\epsilon \over 2}qp^b,
\end{equation}
where $q$ is a unit quaternion to represent the rotation, and $p^b$ is the imaginary quaternion to represent the translation or the position.   Here, unit quaternion $q$ serves as a rotation, taking coordinates  $r^o$ of a point in the original frame to coordinates $r^n$ in the new frame by
\begin{equation} \label{coordinates}
r^n = q^*r^oq,
\end{equation}
where $r^o$ and $r^n$ are two imaginary quaternions, their superscripts $o$ and $n$ represent ``original'' and ``new'' respectively.    On the other hand, every attitude of a rigid body which is free to rotate relative to a fixed frame can be identified by a unique unit quaternion $q$.    Thus, in (\ref{udq}), $q$ is the attitude of the rigid body, while $\hat q$ represents the transformation.

The configuration change rate of a rigid body can be expressed by
\begin{equation} \label{kinematics}
\dot {\hat q} \equiv {d {\hat q} \over dt} = {1 \over 2}\hat q \hat \xi^b,
\end{equation}
where the unit dual quaternion $\hat q$
is the configuration of that rigid body, expressed by (\ref{udq}), and the imaginary dual quaternion
\begin{equation} \label{twist}
\hat \xi^b = \omega^b + \epsilon(\dot p^b + \omega^b \times p^b)
\end{equation}
is the twist of rigid body, with angular velocity $\omega^b$ and linear velocity $\dot p^b \equiv {d p^b \over dt}$.

Combining (\ref{udq}) with (\ref{lnuq}), we have
\begin{equation} \label{lnudq}
\ln \hat q = {1 \over 2}(\theta x + \epsilon p^b).
\end{equation}

Given an imaginary quaternion $v$ and a unit quaternion $q$, the adjoint transformation is defined as
\begin{equation} \label{adjoint}
{\rm Ad}_q v = qvq^*.
\end{equation}
Similarly, given an imaginary dual quaternion $\hat v$ and a unit dual quaternion $\hat q$, the adjoint transformation of the unit dual quaternion $\hat q$ is defined as
\begin{equation} \label{adjointudq}
{\rm Ad}_{\hat q} \hat v = \hat q\hat v\hat q^*.
\end{equation}

A unit dual quaternion $\hat q$ serves as both a specification of the configuration of a rigid body and a transformation taking the coordinates of a point from one frame to another via rotation and translation.
In (\ref{udq}), if $\hat q$ is the configuration of the rigid body, then $q$ and $p^b$ are the attitude of and position of the rigid body respectively.
They have the following property:

Property A: If the configurations of rigid bodies $i$ and $j$ are $\hat q_i$ and $\hat q_j$, and the transformation (relative configuration) from rigid body $i$ to $j$ is $\hat q_{ij}$, then $\hat q_j = \hat q_i\hat q_{ij}$.

Consider two unit dual quaternions $\hat q$ and $\hat q_t$ defined as in (\ref{udq}).  The left-invariant error from $\hat q$ to $\hat q_t$ is
\begin{equation}
\hat q_e = \hat q_t^*\hat q = q_e + {\epsilon \over 2}q_ep_e^b,
\end{equation}
where $q_e = q_t^*q$ and $p_e^b = p^b - {\rm Ad}_{q_e^*} p_t^b$.

Two dual quaternions $\hat p$ and $\hat q$ are said to be similar if there is an appreciable quaternion $\hat u$ such that $\hat p = \hat u^{-1}\hat q\hat u$.   We denote $\hat p \sim \hat q$.    Then $\sim$ is an equivalence relation on the dual quaternions.   Denote by $[\hat q]$ the equivalence class containing $\hat q$. Then $[\hat q]$ is a singleton if and only if $\hat q$ is a dual number.

Denote the collection of $n$-dimensional dual quaternion  vectors by ${\hat {\mathbb Q}}^n$.

For $\hat \vx = (\hat x_1, \hat x_2,\cdots, \hat x_n)^\top$, $\hat \vy = (\hat y_1, \hat y_2,\cdots, \hat y_n)^\top  \in {\hat {\mathbb Q}}^n$, define $\hat \vx^*\hat \vy = \sum_{i=1}^n \hat x_i^*\hat y_i$, where $\hat \vx^* = (\hat x_1^*, \hat x_2^*,\cdots, \hat x_n^*)$ is the conjugate transpose of $\hat \vx$.   We say $\hat \vx$ is appreciable if at least one of its component is appreciable.  We also say that $\hat \vx$ and $\hat \vy$ are orthogonal to each other if $\hat \vx^*\hat \vy = 0$.  By \cite{QLY22}, for any $\hat \vx \in {\hat {\mathbb Q}}^n$, $\hat \vx^*\hat \vx$ is a nonnegative dual number, and if $\hat \vx$ is appreciable, $\hat \vx^*\hat \vx$ is a positive dual number.


\subsection{Dual Quaternion Matrices}

Denote the collections of $m \times n$ dual quaternion  matrices by ${\hat {\mathbb Q}}^{m \times n}$.  Then $\hat A \in {\hat {\mathbb Q}}^{m \times n}$ can be written as
$$\hat A = A + A_d \epsilon,$$
where $A, A_d \in {\mathbb {Q}}^{m \times n}$ are the standard part and the infinitesimal part of $\hat A$, respectively.

The conjugate transpose of $\hat A$ is
$$\hat A^* = A^* + A_d^* \epsilon.$$

A square dual quaternion matrix $\hat A \in {\hat {\mathbb Q}}^{n \times n}$ is called normal if $\hat A^*\hat A = \hat A\hat A^*$, Hermitian if $\hat A^* = \hat A$; unitary if $\hat A^*\hat A = I_n$, where $I_n$ is the $n \times n$ identity matrix; and invertible (nonsingular) if $\hat A\hat B = \hat B\hat A = I_n$ for some $\hat B \in {\hat {\mathbb Q}}^{n \times n}$.  Indeed, if $\hat A= A+A_d \epsilon$ and $\hat B= B+B_d \epsilon$ satisfy $\hat A\hat B = I_n$, then $B = A^{-1}$ and $B_d = - A^{-1}A_d  A^{-1}$. This further implies $\hat B\hat A = I_n$.   Thus, the inverse of $\hat A$ is unique, denoted by $\hat A^{-1}$, and we have $\hat A^{-1} =  A^{-1} - A^{-1}A_d  A^{-1}\epsilon$.

We have $(\hat A\hat B)^{-1} = \hat B^{-1}\hat A^{-1}$ if $\hat A$ and $\hat B$ are invertible, and $\left(\hat A^*\right)^{-1} = \left(\hat A^{-1}\right)^*$ if $\hat A$ is invertible.

Suppose that $\hat A \in {\hat {\mathbb Q}}^{n \times n}$ is an Hermitian matrix.  For any $\hat \vx \in {\hat {\mathbb Q}}^n$, we have $(\hat \vx^*\hat A\hat \vx)^* = \hat \vx^*\hat A\hat \vx.$
Thus, $\hat \vx^*\hat A\hat \vx$ is a dual number.  A dual quaternion Hermitian matrix $\hat A \in {\hat {\mathbb Q}}^{n \times n}$ is called positive semi-definite if for any $\hat \vx \in {\hat {\mathbb Q}}^n$, $\hat \vx^*\hat A\hat \vx \ge 0$; $\hat A$ is called positive definite if for any $\hat \vx \in {\hat {\mathbb Q}}^n$ with $\hat \vx$ being appreciable,  we have $\hat \vx^*\hat A\hat \vx > 0$ and is appreciable.
A square dual quaternion matrix $\hat A \in {\hat {\mathbb Q}}^{n \times n}$ is unitary if and only if its column (row) vectors form an orthonormal basis of ${\hat {\mathbb Q}}^n$.
Then, $\hat A$ is a dual quaternion Hermitian matrix if and only if $A$ and $A_d$ are two quaternion Hermitian matrices.

Suppose that $\hat A \in {\hat {\mathbb Q}}^{n \times n}$.  If there are $\hat \lambda \in \hat {\mathbb Q}$, $\hat \vx \in {\hat {\mathbb Q}}^n$, where $\hat \vx$ is appreciable, such that
\begin{equation}  \label{eigen}
\hat A\hat \vx = \hat \vx\hat \lambda,
\end{equation}
then we say that $\hat \lambda$ is a right eigenvalue of $\hat A$, with $\hat \vx$ as an associated right eigenvector.
If there are $\hat \lambda \in \hat {\mathbb Q}$, $\hat \vx \in {\hat {\mathbb Q}}^n$, where $\hat \vx$ is appreciable, such that
$$\hat A\hat \vx = \hat \lambda\hat \vx,$$
then we say that $\hat \lambda$ is a left eigenvalue of $\hat A$, with $\hat \vx$ as an associated left eigenvector.   If $\hat \lambda$ is a dual number and a right eigenvalue of $\hat A$, then it is also a left eigenvalue of $\hat A$, as  a dual number is commutative with a dual quaternion vector.   In this case, we say that it is an eigenvalue of $\hat A$, with $\hat \vx$ as an associated eigenvector.

The following theorems are proved in \cite{QL22}.

\begin{Thm} \label{p3.0}
Suppose that $\hat \lambda = \lambda + \lambda_d\epsilon \in \hat {\mathbb Q}$ is a right eigenvalue of $\hat A \in {\hat {\mathbb Q}}^{n \times n}$, with associated right eigenvector $\hat \vx = \vx + \vx_d\epsilon \in {\hat {\mathbb Q}}^n$. Then
\begin{equation} \label{e5}
\hat \lambda = {\hat \vx^* \hat A\hat \vx \over \hat \vx^*\hat \vx},
\end{equation}
$\lambda$ is a right eigenvalue of the quaternion matrix $A$ with a
right eigenvector $\vx$, i.e., $\vx \not = \0$ and
\begin{equation} \label{e3}
A\vx = \vx\lambda.
\end{equation}
We also have
\begin{equation} \label{e6}
\lambda = {\vx^* A\vx \over \vx^*\vx}.
\end{equation}
\end{Thm}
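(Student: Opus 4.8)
The plan is to split \eqref{eigen} into its standard and dual parts, dispatch the two quaternion assertions \eqref{e3} and \eqref{e6} from the standard part alone, and then recover the dual-quaternion Rayleigh-quotient identity \eqref{e5} by left-multiplying the eigen-equation by $\hat\vx^*$ and inverting the scalar $\hat\vx^*\hat\vx$; notably, the explicit dual-part comparison is never needed.

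First I would write $\hat A = A + A_d\epsilon$, $\hat\vx = \vx + \vx_d\epsilon$ and $\hat\lambda = \lambda + \lambda_d\epsilon$, and expand both sides of $\hat A\hat\vx = \hat\vx\hat\lambda$. Comparing standard parts immediately gives $A\vx = \vx\lambda$; since $\hat\vx$ is appreciable, at least one component has nonzero standard part, so $\vx \neq \0$, and hence \eqref{e3} holds. Left-multiplying $A\vx = \vx\lambda$ by $\vx^*$ gives $\vx^*A\vx = (\vx^*\vx)\lambda$, and because $\vx^*\vx = \sum_{i=1}^n |x_i|^2$ is a strictly positive real number, dividing by it yields \eqref{e6}.

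For \eqref{e5}, I would left-multiply $\hat A\hat\vx = \hat\vx\hat\lambda$ by $\hat\vx^*$ to obtain $\hat\vx^*\hat A\hat\vx = (\hat\vx^*\hat\vx)\hat\lambda$. The point is that $\hat\vx^*\hat\vx$ is a dual number whose standard part is exactly $\vx^*\vx > 0$, so it is appreciable, hence invertible, and as a dual number it commutes with the dual quaternion $\hat\lambda$. Dividing then gives $\hat\lambda = (\hat\vx^*\hat\vx)^{-1}\hat\vx^*\hat A\hat\vx = \dfrac{\hat\vx^*\hat A\hat\vx}{\hat\vx^*\hat\vx}$, which is \eqref{e5}.

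I expect the only subtle step to be the structural observation underlying \eqref{e5}: one must verify that appreciability of the \emph{vector} $\hat\vx$ upgrades to \emph{appreciability} of the \emph{scalar} $\hat\vx^*\hat\vx$ — not merely to its nonnegativity or positivity in the dual order — since only appreciable dual numbers are invertible. This is exactly where the conclusion $\vx\neq\0$ from the standard-part equation is used, because the standard part of $\hat\vx^*\hat\vx$ equals $\vx^*\vx$, which is then a strictly positive real. Everything else is routine dual-quaternion arithmetic together with the fact, recorded in the preliminaries, that every dual number commutes with every dual quaternion.
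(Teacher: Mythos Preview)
Your argument is correct. Note, however, that this paper does not actually prove Theorem~\ref{p3.0}; it merely quotes the statement and attributes the proof to \cite{QL22}, so there is no in-paper proof to compare against. That said, the route you take --- splitting the eigen-equation into standard and dual parts to get $A\vx=\vx\lambda$ and $\vx\neq\0$, then left-multiplying by $\vx^*$ (respectively $\hat\vx^*$) and using that $\vx^*\vx$ (respectively $\hat\vx^*\hat\vx$) is a real (respectively appreciable dual) number, hence invertible and central --- is exactly the standard argument one would expect, and your handling of the one genuinely nontrivial point (that appreciability of $\hat\vx$ forces the standard part of $\hat\vx^*\hat\vx$ to be the strictly positive real $\vx^*\vx$, so the scalar is invertible and not merely nonnegative in the dual order) is clean and correct.
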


\begin{Thm} \label{t3.2}
A right eigenvalue $\hat \lambda$ of an Hermitian matrix $\hat A = A + A_d\epsilon \in {\hat {\mathbb Q}}^{n \times n}$ must be a dual number, hence an eigenvalue of $\hat A$, and its standard part $\lambda$ is a right eigenvalue of the quaternion Hermitian matrix $A$.   Furthermore, assume that $\hat \lambda = \lambda + \lambda_d \epsilon$,
$\hat \vx = \vx + \vx_d \epsilon \in {\hat {\mathbb Q}}^n$ is an eigenvector of $\hat A$, associate with the eigenvalue $\hat \lambda$,
where $\vx, \vx_d \in {\mathbb Q}^n$.   Then we have
\begin{equation} \label{e7}
\lambda_d = {\vx^* A_d \vx \over \vx^*\vx}.
\end{equation}

A dual quaternion Hermitian matrix has at most $n$ dual number eigenvalues and no other right eigenvalues.

An eigenvalue of a positive semi-definite Hermitian matrix $\hat A \in {\hat {\mathbb Q}}^{n \times n}$ must be a nonnegative dual number.   In that case, $A$ must be positive semi-definite.
An eigenvalue of a positive definite Hermitian matrix $\hat A \in {\hat {\mathbb Q}}^{n \times n}$ must be an appreciable positive dual number.   In that case, $A$ must be positive definite.
\end{Thm}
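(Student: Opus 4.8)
The plan is to reduce every assertion to the Rayleigh-quotient identity $(\ref{e5})$ of Theorem~\ref{p3.0} together with the classical spectral theory of quaternion Hermitian matrices. First, to prove that a right eigenvalue $\hat\lambda$ of an Hermitian $\hat A = A + A_d\epsilon$ is a dual number: given $\hat A\hat\vx = \hat\vx\hat\lambda$ with $\hat\vx$ appreciable, Theorem~\ref{p3.0} gives $\hat\lambda = (\hat\vx^*\hat A\hat\vx)/(\hat\vx^*\hat\vx)$. Since $\hat A^* = \hat A$, we have $(\hat\vx^*\hat A\hat\vx)^* = \hat\vx^*\hat A^*\hat\vx = \hat\vx^*\hat A\hat\vx$, so the numerator is a self-conjugate dual quaternion, hence a dual number; and, as recorded just before Theorem~\ref{p3.0}, appreciability of $\hat\vx$ makes $\hat\vx^*\hat\vx$ a positive --- in particular appreciable and invertible --- dual number. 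A dual number divided by an appreciable dual number is a dual number, so $\hat\lambda$ is a dual number, i.e.\ an eigenvalue. That $\lambda$ is then a right eigenvalue of $A$ with eigenvector $\vx$ is exactly $(\ref{e3})$; it is real because, by $(\ref{e6})$, its standard part is $(\vx^*A\vx)/(\vx^*\vx)\in\mathbb R$, using Hermiticity of $A$.

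For formula $(\ref{e7})$, I would expand $\hat A\hat\vx = \hat\vx\hat\lambda$ into standard and dual parts with $\hat\lambda = \lambda+\lambda_d\epsilon$, $\lambda,\lambda_d\in\mathbb R$: the standard part recovers $A\vx = \vx\lambda$, and the dual part reads $A\vx_d + A_d\vx = \vx\lambda_d + \vx_d\lambda$. Left-multiplying by $\vx^*$ and using $\vx^* A = \vx^* A^* = (A\vx)^* = (\vx\lambda)^* = \lambda\vx^*$ (valid since $\lambda$ is real), the terms $\vx^*A\vx_d$ and $\vx^*\vx_d\lambda$ coincide and cancel, leaving $\vx^*A_d\vx = \lambda_d(\vx^*\vx)$; dividing by the positive real number $\vx^*\vx$ yields $(\ref{e7})$.

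For the counting statement I would invoke the spectral theorem for the quaternion Hermitian matrix $A$: there is a quaternion-unitary $U$ with $A = U\,\mathrm{diag}(\mu_1 I_{m_1},\dots,\mu_r I_{m_r})\,U^*$, with $\mu_1<\dots<\mu_r$ real and $m_1+\dots+m_r=n$. By the first step, the standard part of any right eigenvalue of $\hat A$ is one of the $\mu_i$. Fixing $i$, an eigenpair with standard part $\mu_i$ requires the dual part of the eigen-equation, $(A-\mu_i I_n)\vx_d = \vx\lambda_d - A_d\vx$, to be solvable in $\vx_d$; since $A-\mu_i I_n$ is Hermitian, this forces $\vx\lambda_d - A_d\vx$ to be orthogonal to $\ker(A-\mu_i I_n)$, which (using $\vx\in\ker(A-\mu_i I_n)$ and writing $P_i$ for the orthogonal projector onto this $m_i$-dimensional subspace) says exactly that $\vx$ is an eigenvector, with eigenvalue $\lambda_d$, of the compression $P_iA_dP_i$, itself a quaternion Hermitian operator on an $m_i$-dimensional space. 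Hence the admissible dual parts over standard part $\mu_i$ are among the real eigenvalues of an $m_i\times m_i$ quaternion Hermitian matrix --- at most $m_i$ of them --- so $\hat A$ has at most $m_1+\dots+m_r=n$ dual-number eigenvalues; combined with the first step (no non-dual right eigenvalues) this is the claim. I expect this third part to be the main obstacle: one must handle repeated eigenvalues of $A$ correctly and verify that inside a single $A$-eigenspace the dual parts are precisely the spectrum of the Hermitian compression $P_iA_dP_i$, so that the multiplicities sum to $n$ rather than overshoot it.

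Finally, for the definiteness statements I would use $(\ref{e5})$ once more. If $\hat A$ is positive semidefinite and $\hat\lambda$ is an eigenvalue with appreciable eigenvector $\hat\vx$, then $\hat\vx^*\hat A\hat\vx\ge 0$ while $\hat\vx^*\hat\vx$ is a positive appreciable dual number; writing the numerator as $a+a_d\epsilon$ with $a\ge 0$ (and $a_d\ge 0$ when $a=0$) and the denominator as $b+b_d\epsilon$ with $b>0$, a direct computation with the order on $\hat{\mathbb R}$ shows the quotient is a nonnegative dual number, and that it is positive and appreciable when $\hat A$ is positive definite (then the numerator is itself positive and appreciable). To deduce (semi)definiteness of $A$, restrict to real-embedded vectors $\hat\vx = \vx$, $\vx\in\mathbb Q^n$: then $\hat\vx^*\hat A\hat\vx = \vx^*A\vx + (\vx^*A_d\vx)\epsilon$, whose sign and appreciability are read off its standard part $\vx^*A\vx$, giving $\vx^*A\vx\ge 0$ for all $\vx$ (respectively $\vx^*A\vx>0$ for all $\vx\neq\0$), which is positive semidefiniteness (respectively positive definiteness) of $A$.
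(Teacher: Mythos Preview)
The paper does not itself prove Theorem~\ref{t3.2}; it is quoted, together with Theorems~\ref{p3.0}, \ref{t4.1} and \ref{t2.4}, from \cite{QL22} (see the sentence ``The following theorems are proved in \cite{QL22}'' immediately preceding Theorem~\ref{p3.0}). There is therefore no in-paper argument to compare your proposal against.

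That said, your proof is correct and is the natural one. The reduction to the Rayleigh quotient (\ref{e5}) plus self-conjugacy of $\hat\vx^*\hat A\hat\vx$ cleanly forces $\hat\lambda$ to be a dual number; the derivation of (\ref{e7}) by expanding $\hat A\hat\vx=\hat\vx\hat\lambda$ into standard and dual parts and cancelling $\vx^*A\vx_d=\lambda\vx^*\vx_d$ via $\vx^*A=\lambda\vx^*$ is exactly right; and the definiteness statements follow from (\ref{e5}) together with the arithmetic of the total order on $\hat{\mathbb R}$ as you describe. Your counting argument --- reducing, for each real eigenvalue $\mu_i$ of $A$ of multiplicity $m_i$, the admissible dual parts $\lambda_d$ to the spectrum of the Hermitian compression $P_iA_dP_i$ on the $m_i$-dimensional eigenspace --- is precisely the mechanism that underlies the block-diagonal form in Theorem~\ref{t4.1}, and gives at most $\sum_i m_i=n$ distinct dual-number eigenvalues. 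The only point worth stating more explicitly in a write-up is that the solvability criterion you use, namely that the range of the Hermitian operator $A-\mu_iI_n$ equals the orthogonal complement of its kernel, holds over $\mathbb Q$ just as over $\mathbb C$ (by the quaternionic spectral theorem), so the projection condition $P_i(\lambda_d\vx-A_d\vx)=0$ is indeed equivalent to solvability in $\vx_d$.
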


\begin{Thm} \label{t4.1}
Suppose that $\hat A=A+A_d \epsilon \in {\hat {\mathbb Q}}^{n \times n}$ is an Hermitian matrix.  Then there are unitary matrix $\hat U \in {\hat {\mathbb Q}}^{n \times n}$ and a diagonal matrix $\hat \Sigma \in {\hat {\mathbb R}}^{n \times n}$ such that $\hat \Sigma = \hat U^*\hat A\hat U$, where
\begin{equation} \label{eee1}
\hat \Sigma:= {\rm diag}\left(\lambda_1+\lambda_{1,1}\epsilon,\cdots, \lambda_1+\lambda_{1,k_1}\epsilon, \lambda_2+\lambda_{2,1}\epsilon,\cdots, \lambda_r+\lambda_{r,k_r}\epsilon\right),
\end{equation}
with the diagonal entries of $\hat \Sigma$ being $n$  eigenvalues of $\hat A$,
\begin{equation} \label{eee2}
\hat A\hat \vu_{i, j} = \hat \vu_{i, j}(\lambda_i +\lambda_{i, j}\epsilon),
\end{equation}
for $j = 1, \cdots, k_i$ and $i = 1, \cdots, r$, $\hat U = (\hat \vu_{1,1}, \cdots, \hat \vu_{1, k_1}, \cdots, \hat \vu_{r, k_r})$,
$\lambda_1 > \lambda_2 > \cdots > \lambda_r$ are real numbers, $\lambda_i$ is a $k_i$-multiple right eigenvalue of $A$, $\lambda_{i, 1} \ge \lambda_{i, 2} \ge \cdots \ge \lambda_{i, k_i}$ are also real numbers, $\sum_{i=1}^r k_i = n$.   Counting possible multiplicities $\lambda_{i, j}$, the form $\hat \Sigma$ is unique.

\end{Thm}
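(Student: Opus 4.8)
The plan is to bootstrap from the classical fact that a quaternion Hermitian matrix is unitarily (over $\mathbb Q$) similar to a real diagonal matrix, and then to absorb the infinitesimal part by a block-wise first-order correction. Since $\hat A = \hat A^*$ forces $A = A^*$ and $A_d = A_d^*$ to be quaternion Hermitian matrices, I would start by picking a unitary $V \in {\mathbb Q}^{n\times n}$ with $V^*AV = \Lambda := {\rm diag}(\lambda_1 I_{k_1},\dots,\lambda_r I_{k_r})$, where $\lambda_1 > \cdots > \lambda_r$ are the distinct real right eigenvalues of $A$ and $k_i$ their multiplicities. Regarding $V$ as an element of ${\hat {\mathbb Q}}^{n\times n}$ it is still unitary, and $\hat B := V^*\hat A V = \Lambda + B_d\epsilon$ with $B_d := V^*A_dV$ again Hermitian.

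The second step is to annihilate the off-diagonal blocks of $B_d$ relative to the partition $k_1,\dots,k_r$. I would look for a corrector of the form $\hat W = I_n + W_d\epsilon$; this is unitary precisely when $W_d$ is anti-Hermitian, and then a short computation gives $\hat W^*\hat B\hat W = \Lambda + \bigl(B_d + \Lambda W_d - W_d\Lambda\bigr)\epsilon$, whose $(i,j)$ block equals $(B_d)_{ij} + (\lambda_i-\lambda_j)(W_d)_{ij}$ because the real scalars $\lambda_i$ are central. For $i\neq j$ I set $(W_d)_{ij} := -(B_d)_{ij}/(\lambda_i-\lambda_j)$ (legitimate since $\lambda_i\neq\lambda_j$) and take the diagonal blocks of $W_d$ to be zero; using $(B_d)_{ji}=(B_d)_{ij}^*$ one checks $(W_d)_{ji}=-(W_d)_{ij}^*$, so $W_d$ is anti-Hermitian and $\hat W$ is unitary. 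Now $\hat C := \hat W^*\hat B\hat W = \Lambda + C_d\epsilon$ with $C_d={\rm diag}(C_1,\dots,C_r)$, each $C_i=(B_d)_{ii}\in{\mathbb Q}^{k_i\times k_i}$ Hermitian (Hermitianness is preserved under unitary congruence). Diagonalizing each $C_i$ by a unitary $P_i$ with $P_i^*C_iP_i={\rm diag}(\lambda_{i,1},\dots,\lambda_{i,k_i})$, $\lambda_{i,1}\ge\cdots\ge\lambda_{i,k_i}$ real, and setting $P:={\rm diag}(P_1,\dots,P_r)$, the block structure gives $P^*\Lambda P=\Lambda$, hence $P^*\hat CP=\hat\Sigma$. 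Then $\hat U := V\hat WP$ is a product of unitary dual quaternion matrices, so it is unitary, and $\hat U^*\hat A\hat U=\hat\Sigma$. Reading the columns of $\hat U$ off $\hat A\hat U=\hat U\hat\Sigma$ (valid since the diagonal dual-number entries are central) yields (\ref{eee2}); that $A$ has $\lambda_i$ as a $k_i$-fold right eigenvalue is already built in, and Theorem \ref{t3.2} then certifies that these $n$ dual numbers exhaust the right eigenvalues of $\hat A$.

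Finally, for uniqueness I would argue that $\hat\Sigma$ is forced. Given any unitary $\hat U'=U'+U'_d\epsilon$ with $\hat U'^*\hat A\hat U'=\hat\Sigma'$ real-diagonal, the standard part reads $U'^*AU'=\Lambda'$, so $\Lambda'$ is a real diagonal matrix unitarily similar to $A$; up to a harmless permutation of columns it equals $\Lambda$, and the $i$-th block $U'_i$ of columns of $U'$ is an orthonormal basis of $\ker(A-\lambda_iI_n)$. Taking the dual part and writing $N:=U'^*U'_d$ (anti-Hermitian because $\hat U'$ is unitary), one gets $\Sigma'_d=U'^*A_dU' + \Lambda N - N\Lambda$, whose $(i,i)$ block collapses to $(U'_i)^*A_dU'_i$ since $\lambda_iN_{ii}-N_{ii}\lambda_i=0$. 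Hence the multiset $\{\lambda'_{i,j}\}_j$ is exactly the spectrum of the compression of $A_d$ to $\ker(A-\lambda_iI_n)$, which does not depend on the choice of orthonormal basis of that eigenspace; the same holds for $\{\lambda_{i,j}\}_j$ by the construction above, so the two multisets agree, and the prescribed orderings determine $\hat\Sigma'=\hat\Sigma$. I expect the only genuinely delicate points to be the verification that the explicitly defined $W_d$ is anti-Hermitian and the uniqueness bookkeeping; the remaining manipulations are routine dual arithmetic on top of the quaternion Hermitian spectral theorem.
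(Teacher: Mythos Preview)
The paper does not actually supply a proof of this theorem: it is listed among the preliminaries with the remark ``The following theorems are proved in \cite{QL22}'', so there is no in-paper argument to compare against. Your proposal is nonetheless a complete and correct proof. The three-stage reduction --- diagonalize the standard part $A$ over $\mathbb{Q}$, solve the Sylvester-type equation $(\lambda_i-\lambda_j)(W_d)_{ij}=-(B_d)_{ij}$ to block-diagonalize the dual part via the unitary corrector $I_n+W_d\epsilon$, then diagonalize the resulting Hermitian blocks --- is exactly the natural first-order perturbation argument, and your verifications that $W_d^*=-W_d$ and that $P$ commutes with $\Lambda$ are the only places one could slip. Your uniqueness argument is also sound: writing $N=U'^*U'_d$ and using $A=U'\Lambda U'^*$ gives the dual part $U'^*A_dU'+\Lambda N-N\Lambda$, whose $i$th diagonal block reduces to the compression $(U'_i)^*A_dU'_i$; since $\hat\Sigma'$ is assumed diagonal, its diagonal entries in that block are exactly the eigenvalues of this compression, and those are invariant under change of orthonormal basis of $\ker(A-\lambda_iI_n)$.
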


\begin{Thm} \label{t2.4}
Suppose that $\hat A \in {\hat {\mathbb Q}}^{n \times n}$ is Hermitian.  Then $\hat A$ has exactly $n$ eigenvalues, which are all dual numbers.  There are also $n$ eigenvectors, associated with these $n$ eigenvalues.   The Hermitian matrix $\hat A$ is positive semi-definite or definite if and only if all of these eigenvalues are nonnegative, or positive and appreciable, respectively.
\end{Thm}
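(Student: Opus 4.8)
The plan is to obtain Theorem~\ref{t2.4} as a consequence of Theorems~\ref{t3.2} and~\ref{t4.1}, supplying the ``if'' parts of the two definiteness characterizations, which are not explicitly contained in the earlier statements. For the first assertion I would invoke Theorem~\ref{t4.1}: it produces a unitary $\hat U \in \hat{\mathbb Q}^{n\times n}$ and a diagonal $\hat \Sigma = {\rm diag}(\hat\lambda_1,\dots,\hat\lambda_n) \in \hat{\mathbb R}^{n\times n}$ with $\hat\Sigma = \hat U^*\hat A\hat U$, hence $\hat A = \hat U\hat\Sigma\hat U^*$ by unitarity. By \eqref{eee2} the $n$ dual-number diagonal entries of $\hat\Sigma$ are eigenvalues of $\hat A$ and the columns of $\hat U$ are associated eigenvectors; since $\hat U$ is unitary its columns form an orthonormal basis of $\hat{\mathbb Q}^n$, so each is appreciable and is a genuine eigenvector. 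That $\hat A$ has no further right eigenvalues, and at most $n$ dual-number ones, is part of Theorem~\ref{t3.2}; combined with the $n$ eigenvalues just exhibited, $\hat A$ has exactly $n$ eigenvalues, all dual numbers, with $n$ associated eigenvectors.

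For the ``only if'' directions of the definiteness statements I would simply quote the last paragraph of Theorem~\ref{t3.2}: an eigenvalue of a positive semi-definite (resp.\ positive definite) dual quaternion Hermitian matrix is a nonnegative (resp.\ appreciable positive) dual number. For the ``if'' directions I would argue via the diagonalization. Given any $\hat\vx \in \hat{\mathbb Q}^n$, set $\hat\vy = \hat U^*\hat\vx$; then
\begin{equation}\label{plan:quad}
\hat\vx^*\hat A\hat\vx = \hat\vy^*\hat\Sigma\hat\vy = \sum_{i=1}^n \hat\lambda_i\,(\hat y_i^*\hat y_i),
\end{equation}
where I have used that each diagonal entry $\hat\lambda_i$ of $\hat\Sigma$ is a dual number and therefore commutes with the quaternion $\hat y_i^*\hat y_i$. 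By the fact recalled in Section~2 (from \cite{QLY22}), each $\hat y_i^*\hat y_i$ is a nonnegative dual number, and it is positive and appreciable precisely when $\hat y_i$ is appreciable. If every $\hat\lambda_i \ge 0$, then each summand in \eqref{plan:quad} is a product of two nonnegative dual numbers, hence nonnegative, and so is the sum; thus $\hat\vx^*\hat A\hat\vx \ge 0$ and $\hat A$ is positive semi-definite. If moreover every $\hat\lambda_i$ is positive and appreciable and $\hat\vx$ is appreciable, then, $\hat U^*$ being invertible, $\hat\vy$ is appreciable, so some $\hat y_{i_0}$ is appreciable; the $i_0$-th summand of \eqref{plan:quad} is then positive and appreciable while the remaining summands are nonnegative, so the total is positive and appreciable, and $\hat A$ is positive definite.

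The routine but order-sensitive ingredients, which I would isolate as a short preliminary observation, are that the nonnegative dual numbers are closed under addition and multiplication, and that the product of a nonnegative dual number with a positive appreciable one is positive and appreciable --- all checked directly against the total order $\hat p \le \hat q$ of Section~2. The only other point needing a line of justification is that the unitary change of variables $\hat\vx \mapsto \hat U^*\hat\vx$ preserves appreciability, which holds because the standard part of $\hat U^*$ is an invertible quaternion matrix. I do not anticipate a serious obstacle: the substance of the theorem is already carried by Theorems~\ref{t3.2} and~\ref{t4.1}, and what remains is this bookkeeping with the dual-number order.
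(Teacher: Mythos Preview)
The paper does not itself prove Theorem~\ref{t2.4}; it is imported, together with Theorems~\ref{p3.0}--\ref{t4.1}, from \cite{QL22}, so there is no in-paper argument to compare against. Your proposal is correct and supplies a self-contained derivation from the companion cited results: the diagonalization of Theorem~\ref{t4.1} furnishes the $n$ dual-number eigenvalues and the associated (orthonormal, hence appreciable) eigenvectors, Theorem~\ref{t3.2} rules out any further right eigenvalues and gives the ``only if'' directions of the definiteness characterizations, and your quadratic-form computation \eqref{plan:quad} via the unitary change of variables $\hat\vy = \hat U^*\hat\vx$ correctly handles the ``if'' directions, with the order-theoretic lemmas about nonnegative and appreciable dual numbers being straightforward to verify from the definitions in Section~2.
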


\section{Graph Model of Multi-Agent Formation Control}

Following \cite{LWHF14, WYZ}, we model the rigid bodies in the multi-agent formation control by an undirected graph.   We assume that the rigid-bodies are omni-directional, i.e., rigid-body $i$ can sense rigid body $j$ if and only if rigid-body $j$ can sense rigid body $i$.  Then multiple rigid-bodies are modelled by an undirected graph $G = (V, E)$, called the mutual visibility graph.  Here, the vertex set $V$ is the set of rigid bodies.   Edge $(i, j) \in E$ if rigid-bodies $i$ and $j$ are mutual visual.  Let $n = |V|$, where $|V|$ means the cardinality of the set $V$.  As in spectral graph theory, we have adjacency matrix $\hat A(G)$.   Then we have the following three $n \times n$ dual quaternion matrices: the relative configuration adjacency matrix $\hat C(G)$, the logarithm adjacency matrix $\hat Ln(G)$, and the relative twist adjacency matrix $\hat T(G)$.  Recall the concepts of relative configuration, twist and logarithm of dual quaternions discussed in Section 2.  The $(i, j)$ entries of $\hat C(G), \hat Ln(G)$ and $\hat T(G)$ are the relative configuration $\hat q_{ij}$, the logarithm of the relative configuration $\ln \hat q_{ij} = {1 \over 2}(\theta_{ij} x_{ij} + \epsilon p_{ij}^b)$, and the relative twist $\hat \xi_{ij}^b$, respectively, if $i, j = 1, \cdots, n, i \not = j$ and $(i, j) \in E$.   The $(i, j)$ entries of $\hat C(G), \hat Ln(G)$ and $\hat T(G)$ are zero otherwise.

Note that $\hat q_{ij}$ is a unit dual quaternion, $\ln \hat q_{ij}$ is a unit imaginary dual quaternion, and $\hat \xi_{ij}^b$ is an imaginary dual quaternion.

We have the following theorem.

\begin{Thm} \label{dqmhermitian}
The relative configuration adjacency matrix $\hat C(G)$ and the logarithm adjacency matrix $\hat Ln(G)$ are dual quaternion Hermitian matrices.    For a cycle $\{ j_1, \cdots, j_k \}$,  with $j_{k+1} = j_1$, of $G$, we have
\begin{equation} \label{cycle}
\prod_{i=1}^k \hat q_{j_ij_{i+1}} = 1.
\end{equation}
\end{Thm}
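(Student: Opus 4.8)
The plan is to extract everything from one structural fact and then do bookkeeping. By Property A, for every edge $(i,j)\in E$ the relative configuration satisfies $\hat q_j = \hat q_i\hat q_{ij}$, and since each configuration $\hat q_i$ is a unit dual quaternion it is appreciable, hence invertible with $\hat q_i^{-1} = \hat q_i^*$; therefore
\begin{equation}\label{sketchreli}
\hat q_{ij} = \hat q_i^{-1}\hat q_j = \hat q_i^*\hat q_j .
\end{equation}
From \eqref{sketchreli} everything follows: I would first prove the cycle identity \eqref{cycle}, then obtain the Hermitian property of $\hat C(G)$ from its $k=2$ instance, and finally treat $\hat Ln(G)$ with one extra observation about the logarithm.

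For the cycle, let $\{j_1,\dots,j_k\}$ be a cycle with $j_{k+1}=j_1$, so that each consecutive pair is an edge and every factor of $\prod_{i=1}^k\hat q_{j_ij_{i+1}}$ is a genuine entry of $\hat C(G)$. Substituting \eqref{sketchreli} and using associativity of dual quaternion multiplication, the product telescopes:
\[
\prod_{i=1}^k\hat q_{j_ij_{i+1}} = \bigl(\hat q_{j_1}^{-1}\hat q_{j_2}\bigr)\bigl(\hat q_{j_2}^{-1}\hat q_{j_3}\bigr)\cdots\bigl(\hat q_{j_k}^{-1}\hat q_{j_{k+1}}\bigr) = \hat q_{j_1}^{-1}\hat q_{j_{k+1}} = \hat q_{j_1}^{-1}\hat q_{j_1} = 1,
\]
which is \eqref{cycle}.

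For the Hermitian property, note that both $\hat C(G)$ and $\hat Ln(G)$ have zero diagonal entries and zero $(i,j)$-entries whenever $(i,j)\notin E$, and $0^*=0$, so only the edge entries need checking. Multiplying the two instances of \eqref{sketchreli} for $(i,j)$ and $(j,i)$ gives $\hat q_{ij}\hat q_{ji} = \hat q_i^{-1}\hat q_j\hat q_j^{-1}\hat q_i = 1$, so $\hat q_{ji} = \hat q_{ij}^{-1} = \hat q_{ij}^*$, the last equality because $\hat q_{ij}$ is a unit dual quaternion; hence the $(j,i)$-entry of $\hat C(G)$ is the conjugate of its $(i,j)$-entry and $\hat C(G)^*=\hat C(G)$. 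For $\hat Ln(G)$, the $(i,j)$-entry is $\ln\hat q_{ij} = \tfrac12(\theta_{ij}x_{ij} + \epsilon p_{ij}^b)$, an imaginary dual quaternion (standard and dual parts both imaginary), so $(\ln\hat q_{ij})^* = -\ln\hat q_{ij}$; combined with $\ln\hat q_{ji} = \ln\hat q_{ij}^{-1} = -\ln\hat q_{ij}$ this gives $\ln\hat q_{ji} = (\ln\hat q_{ij})^*$, hence $\hat Ln(G)^*=\hat Ln(G)$.

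The only step that is not pure bookkeeping is the identity $\ln\hat q^{-1} = -\ln\hat q$ for a unit dual quaternion $\hat q$, which I would isolate as a small lemma (or cite). It is cleanest to argue through the exponential: on imaginary dual quaternions $\exp$ is conjugation-equivariant, $\exp(\hat\xi)^* = \exp(\hat\xi^*) = \exp(-\hat\xi)$, so if $\hat q = \exp(\hat\xi)$ then $\hat q^{-1} = \hat q^* = \exp(-\hat\xi)$ and applying $\ln$ yields $\ln\hat q^{-1} = -\hat\xi = -\ln\hat q$; this matches the screw-axis (Chasles) picture, in which reversing a rigid motion negates both the rotation about, and the translation along, the screw axis. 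I would flag this as the delicate point, since a direct verification from the coordinate formula \eqref{lnudq} is fiddly — one must check that the translational part appearing in $\ln\hat q_{ji}$ is exactly $-$ that of $\ln\hat q_{ij}$ — whereas the exp-equivariance argument sidesteps it entirely; the telescoping identity and the case analysis over matrix entries are then routine.
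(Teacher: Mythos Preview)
Your proof is correct and follows essentially the same route as the paper's: both derive $\hat q_{ij}=\hat q_i^{-1}\hat q_j$ from Property~A, use the unit-dual-quaternion identity $\hat q_{ij}^{-1}=\hat q_{ij}^*$ to get the Hermitian property of $\hat C(G)$, combine $\ln\hat q_{ij}^{-1}=-\ln\hat q_{ij}$ with the imaginary property of the logarithm for $\hat Ln(G)$, and telescope for the cycle identity. Your version merely reorders the steps (cycle first) and supplies extra justification for the logarithm step that the paper simply asserts.
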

\begin{proof}
For $i, j = 1, \cdots, n$, $i \not = j$, by Property A,  $\hat q_j = \hat q_i \hat q_{ij}$.   Thus, we have
$$\hat q_{ij} = (\hat q_i)^{-1} \hat q_j.$$
Thus,
\begin{equation} \label{cginverse}
\hat q_{ji} = (\hat q_j)^{-1}\hat q_i = (\hat q_{ij})^{-1}.
\end{equation}
Since $\hat q_{ij}$ is a unit dual quaternion, this implies that
\begin{equation} \label{entryconj}
\hat q_{ji} = (\hat q_{ij})^*.
\end{equation}
This shows that $\hat C(G)$ is an Hermitian matrix.  By (\ref{cginverse}), we have
$$\ln \hat q_{ji} = - \ln \hat q_{ij}.$$
Since $\ln \hat q_{ij}$ is an imaginary dual quaternion, this implies
$$\ln \hat q_{ji} = \left(\ln \hat q_{ij}\right)^*.$$
Thus, $\hat Ln(G)$ is also an Hermitian matrix.

On the cycle $\{ j_1, \cdots, j_k \}$, we have
$\hat q_{j_{i+1}} = \hat q_{j_i} \hat q_{j_ij_{i+1}}$ for $i = 1, \cdots, k$.   This implies (\ref{cycle}).   
\end{proof}

Formula (\ref{cycle}) shows that if there is a cycle in the mutual visibility graph $G$, then we may always delete one edge in that cycle without affecting the description of mutual visibility.   Repeating this process, eventually the mutual visibility graph contains no cycle.   If the original mutual visibility graph is connected, then we obtain a reduced mutual visibility graph, which is a tree.  Then we may apply the formation control strategy described in \cite{WYL12}.

\section{Dual Quaternion Laplacian Matrices}

In the last section, we show that the relative configuration adjacency matrix $\hat C(G)$ and the logarithm adjacency matrix $\hat Ln(G)$ are dual quaternion Hermitian matrices.  This lays the base for us to study the stability issue of the multi-agent formation control problem.   In this section, we set the dual quaternion Laplacian matrix theory such that we have a tool to tackle this issue.  Before doing this, we
establish a Gershgorian type theorem for dual quaternion Hermitian matrices.

\begin{Thm} \label{Gershgorin}
Suppose that $\hat H = (\hat h_{ij}) \in {\hat {\mathbb Q}}^{n\times n}$ is an Hermitian matrix.  Then the $n$ eigenvalues of $\hat H$ lay in the following $n$ sets:
$$S_i = \left\{ \hat \lambda \in {\hat {\mathbb D}} : | \lambda - \hat h_{ii} | \le \sum_{j \not = i} |\hat h_{ij}| \right\},$$
for $i = 1, \cdots, n$.
\end{Thm}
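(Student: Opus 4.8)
The plan is to reduce the dual-quaternion Gershgorin statement to two classical facts that are already available: the structure theorem for dual quaternion Hermitian matrices (Theorems~\ref{t3.2} and~\ref{t4.1}), which tells us that the standard part $\lambda$ of any eigenvalue $\hat\lambda$ of $\hat H$ is a right eigenvalue of the quaternion Hermitian matrix $H$, and the expansion formulas \eqref{e6} and \eqref{e7} for $\lambda$ and $\lambda_d$ as Rayleigh-type quotients. First I would fix an eigenvalue $\hat\lambda = \lambda + \lambda_d\epsilon$ of $\hat H$ with appreciable eigenvector $\hat\vx = \vx + \vx_d\epsilon$; by Theorem~\ref{t3.2} we have $H\vx = \vx\lambda$ with $\vx \neq \0$, and after normalising $\vx^*\vx = 1$ we get $\lambda = \vx^* H \vx$ from \eqref{e6}. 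Writing out the $i$-th component of $H\vx = \vx\lambda$ and isolating the diagonal term gives $(\,h_{ii} - \lambda\,)x_i = -\sum_{j\neq i} h_{ij} x_j$, and one picks the index $i$ for which $|x_i|$ is maximal; dividing through by $x_i$ (on the right, carefully, since quaternion multiplication is noncommutative — one should write $x_j x_i^{-1}$ and keep track of the side) and applying the triangle inequality and $|x_j/x_i| \le 1$ yields $|\lambda - h_{ii}| \le \sum_{j\neq i}|h_{ij}|$, the standard-part half of the claimed inclusion $\hat\lambda \in S_i$.

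The second half is the dual-part estimate, and this is where the only real work lies. With $i$ still the index of a component of maximal magnitude for $\vx$, I need $\lambda_d$ to satisfy the corresponding infinitesimal inequality so that $|\lambda - \hat h_{ii}| \le \sum_{j\neq i}|\hat h_{ij}|$ holds as a dual-number inequality in the total order of \cite{QLY22}. There are two cases according to whether $\lambda \neq h_{ii}$ at the standard level (so the inequality $|\lambda - \hat h_{ii}| \le \sum_{j\neq i}|\hat h_{ij}|$ is strict in its standard parts, and nothing more needs checking — the total order makes the dual parts irrelevant once the standard parts are strictly ordered) or $\lambda = h_{ii}$. In the delicate case $\lambda = h_{ii}$ one must compare $\lambda_d$ against the dual part of $\sum_{j\neq i}|\hat h_{ij}|$, using the magnitude formula \eqref{magnitude} together with \eqref{e7} for $\lambda_d$. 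Here the natural strategy is to differentiate the whole Gershgorin argument: apply the standard-part version not to $\hat H$ but to an auxiliary real perturbation. Concretely, the eigenvalue equation $\hat H \hat\vx = \hat\vx\hat\lambda$ expands to $H\vx = \vx\lambda$ at order $\epsilon^0$ and $H\vx_d + H_d\vx = \vx\lambda_d + \vx_d\lambda$ at order $\epsilon^1$; taking $\vx^*(\cdot)\vx$ of the order-$\epsilon$ equation and using $H\vx = \vx\lambda$, $\vx^*\vx = 1$ recovers $\lambda_d = \vx^* H_d \vx$, and then the same maximal-component-index manipulation applied to the first-order equation, exploiting that in the case $\lambda = h_{ii}$ the leading terms cancel, produces the required bound on $\lambda_d$.

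The main obstacle I anticipate is bookkeeping in the noncommutative, nonarchimedean setting: one must be disciplined about which side scalars multiply on when dividing the component equation by $x_i$, and one must verify that "pick the index with $|x_i|$ maximal" remains the right move after passing to dual parts — it is conceivable that the standard-part-maximal index is not the one that controls the dual part, in which case one may need to argue that when $\lambda = h_{ii}$ the whole block of indices achieving the maximal standard magnitude can be treated together (a kernel/invariant-subspace argument, restricting $H$ to $\ker(H - \lambda I)$ and noting that $\vx$ lives there). A clean way to finesse this is to invoke Theorem~\ref{t4.1}: diagonalise $\hat H = \hat U\hat\Sigma\hat U^*$, so that membership of all $n$ eigenvalues in $\bigcup_i S_i$ can be checked via the unitary invariance of the relevant quantities, though one still has to connect the diagonal entries of $\hat\Sigma$ back to the diagonal entries $\hat h_{ii}$ of $\hat H$, which is exactly the Gershgorin content. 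I would therefore present the direct Rayleigh-quotient-plus-maximal-component proof as the backbone, with the two-case split on $\lambda$ versus $h_{ii}$ handling the dual part, and flag the $\lambda = h_{ii}$ case as the one requiring the first-order eigenvalue equation rather than just the magnitude formula.
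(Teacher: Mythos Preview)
Your approach is both more complicated than the paper's and contains a genuine error in the case analysis.

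The paper runs the classical Gershgorin argument \emph{directly at the dual quaternion level}, with no splitting into standard and dual parts. From $\hat H\hat\vx = \hat\lambda\hat\vx$ one isolates the $i$-th component, chooses $i$ so that $|\hat x_i| = \max_j |\hat x_j|$ in the total order on dual numbers (not $|x_i| = \max_j |x_j|$ on standard parts), observes that appreciability of $\hat\vx$ forces $|\hat x_i|$ to be appreciable, applies the triangle inequality and submultiplicativity of the dual quaternion magnitude from \cite{QLY22}, and divides through by the appreciable dual number $|\hat x_i|$. That is the entire proof; no Rayleigh quotients, no first-order eigenvalue equation, no case split.

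Your case split is wrong: you claim that $\lambda \neq h_{ii}$ makes the standard-part inequality $|\lambda - h_{ii}| \le \sum_{j\neq i}|h_{ij}|$ strict, but this is false --- equality can certainly occur with $\lambda \neq h_{ii}$. The correct dichotomy (strict versus equality at the standard level) would leave you with a harder residual case than the one you analyze. More importantly, the ``index issue'' you correctly worry about --- that the $i$ maximizing $|x_i|$ may not control the dual part --- is exactly what the paper's device of maximizing the \emph{dual} magnitude $|\hat x_i|$ resolves in one stroke: by the definition of the total order, that index simultaneously maximizes the standard magnitude and, among ties, the dual contribution. Your detour through Theorems~\ref{t3.2}--\ref{t4.1}, the formulas \eqref{e6}--\eqref{e7}, and the order-$\epsilon$ eigenvalue equation is unnecessary once you pick the right maximizer.
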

\begin{proof}   Suppose that $\hat \lambda$ is an eigenvalue of $H$ with an eigenvector $\hat \vx$.   Then $\hat \lambda \in {\hat {\mathbb R}}$, and $\hat \vx \in {\hat {\mathbb Q}}^n$ is appreciable.  By (\ref{eigen}), we have
$$\sum_{j=1}^n \hat h_{ij}\hat x_j = \hat \lambda \hat x_i,$$
i.e.,
$$\sum_{j\not = i} \hat h_{ij}\hat x_j = \left(\hat \lambda - \hat h_{ii} \right)\hat x_i,$$
for $i = 1, \cdots, n$.   Assume that $\left|\hat x_i\right| = \max_{j = 1, \cdots, n} \left|\hat x_j\right|$.   Since $\hat \vx$ is appreciable, $\left|\hat x_i\right|$ must be appreciable.   Then we have
$$| \lambda - \hat h_{ii} | \le \sum_{j \not = i} |\hat h_{ij}|.$$
This proves the theorem.
\end{proof}

 Suppose that we have a mutual visibility graph $G = (V, E)$ with $|V| = n$.  For each $i \in V$, let $d(i)$ be the number of edges $(i, j) \in E$.  Let $D(G) = (d_{ij})$ be an $n \times n$ real diagonal matrix, with its $i$th diagonal entry being $d(i)$.  Let $A(G) = (a_{ij})$ be an $n \times n$ traceless matrix, with $a_{ij} = 1$ if and only if $(i, j) \in E$.   Here, the word ``traceless'' means that the diagonal entries of $A$ are zero.  Let $L(G) = D(G) - A(G)$.   Then $D(G)$, $A(G)$ and $L(G)$ are the degree matrix, the adjacency matrix and the Laplacian matrix of $G$.  Spectral graph theory are based upon these three matrices.

 We now extend these to dual quaternion matrices. We keep the degree matrix without any change as the degree $d(i)$, which is the number of rigid-bodies that can be sensed by the rigid-body $i$. This is meaningful in formation control.  Then we call an  $n \times n$ dual quaternion Hermitian matrix $\hat H =\left(\hat{h}_{ij}\right)$ a dual quaternion adjacency matrix, if $\hat{h}_{ij} = \left(\hat{h}_{ji}\right)^*$ is a unit dual quaternion for any $i<j$ satisfying that the rigid body $i$ can be sensed by the rigid body $j$, and other entries $0$. Apparently, the relative configuration adjacency matrix $\hat C(G)$ and the logarithm adjacency matrix $\hat Ln(G)$ are of such a type. Furthermore, we call $\hat L(\hat H) = D-\hat H$ the dual quaternion Laplacian matrix, associated with $\hat H$.


 \begin{Thm}  \label{PSD}
 A dual quaternion Laplacian matrix is a positive semi-definite dual quaternion Hermitian matrix.  Its $n$ eigenvalues are nonnegative dual numbers.
 \end{Thm}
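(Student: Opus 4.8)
The plan is to realize the dual quaternion Laplacian $\hat L(\hat H) = D - \hat H$ as $\hat B^*\hat B$ for a suitable ``incidence'' matrix $\hat B$, so that positive semi-definiteness reduces to the identity $\hat \vx^*\hat L(\hat H)\hat \vx = (\hat B\hat \vx)^*(\hat B\hat \vx)$ together with the nonnegativity of $\hat \vy^*\hat \vy$, and then to read off the eigenvalue conclusion from Theorem \ref{t2.4}.

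First I would record that $\hat L(\hat H)$ is Hermitian: $D$ is a real diagonal matrix, hence Hermitian; $\hat H$ is Hermitian by the definition of a dual quaternion adjacency matrix; and the difference of Hermitian matrices is Hermitian. Next, fixing an orientation of each edge, for every $e = \{i,j\} \in E$ with $i<j$ I would let the $e$-th row of $\hat B \in {\hat {\mathbb Q}}^{|E| \times n}$ carry $1$ in column $i$, $-\hat h_{ij}$ in column $j$, and $0$ elsewhere. The core computation is to check $\hat B^*\hat B = \hat L(\hat H)$ entrywise. On the diagonal, $(\hat B^*\hat B)_{kk} = \sum_{e \ni k}(\hat B_{ek})^*\hat B_{ek}$, and each incident edge contributes $1^*\cdot 1 = 1$ or $(-\hat h_{ij})^*(-\hat h_{ij}) = \hat h_{ij}^*\hat h_{ij} = |\hat h_{ij}|^2 = 1$, since the nonzero entries of $\hat H$ are \emph{unit} dual quaternions; hence $(\hat B^*\hat B)_{kk} = d(k) = D_{kk}$. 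Off the diagonal, if $\{k,\ell\}\notin E$ no edge contributes, while for $\{k,\ell\}\in E$ with $k<\ell$ only the edge $e=\{k,\ell\}$ contributes, giving $(\hat B^*\hat B)_{k\ell} = 1^*\cdot(-\hat h_{k\ell}) = -\hat h_{k\ell} = \hat L(\hat H)_{k\ell}$; the sub-diagonal entry then follows using $\hat h_{k\ell} = \hat h_{\ell k}^*$. Thus $\hat L(\hat H) = \hat B^*\hat B$.

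With this in hand, for any $\hat \vx \in {\hat {\mathbb Q}}^n$ we get $\hat \vx^*\hat L(\hat H)\hat \vx = (\hat B\hat \vx)^*(\hat B\hat \vx)$, which is a nonnegative dual number by the fact recalled in Section 2 from \cite{QLY22} that $\hat \vy^*\hat \vy \ge 0$ for every dual quaternion vector $\hat \vy$. Therefore $\hat L(\hat H)$ is a positive semi-definite dual quaternion Hermitian matrix, and Theorem \ref{t2.4} immediately gives that it has exactly $n$ eigenvalues and that all of them are nonnegative dual numbers, which is the assertion.

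The only delicate point is the bookkeeping in $\hat B^*\hat B = \hat L(\hat H)$: one must keep the edge orientations $i<j$ fixed throughout, use $\hat h_{k\ell} = \hat h_{\ell k}^*$ for the sub-diagonal entries, and invoke the \emph{exact} identity $\hat h_{ij}^*\hat h_{ij} = 1$ on the diagonal (valid precisely because each nonzero entry is a unit dual quaternion, as holds for $\hat C(G)$ and $\hat Ln(G)$). An equivalent route that sidesteps $\hat B$ is to expand $\hat \vx^*\hat L(\hat H)\hat \vx$ directly and complete the square edge by edge, obtaining $\sum_{\{i,j\}\in E}(\hat x_i - \hat h_{ij}\hat x_j)^*(\hat x_i - \hat h_{ij}\hat x_j)$; that version needs in addition the routine remark that a finite sum of nonnegative dual numbers is nonnegative in the total order of \cite{QLY22}.
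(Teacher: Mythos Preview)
Your argument is correct, but it follows a genuinely different path from the paper's own proof. The paper deduces the eigenvalue statement \emph{first}, directly from the Gershgorin-type Theorem~\ref{Gershgorin}: since the diagonal entries of $\hat L(\hat H)$ are the real numbers $d(i)$ and the off-diagonal entries are either $0$ or (negatives of) unit dual quaternions, the $i$th Gershgorin region is $|\hat\lambda - d(i)| \le d(i)$, which forces $\hat\lambda \ge 0$; positive semi-definiteness then follows from Theorem~\ref{t2.4}. You reverse the logical order: you establish positive semi-definiteness first via the explicit incidence-matrix factorization $\hat L(\hat H) = \hat B^*\hat B$ (the dual quaternion analogue of the classical identity $L = B^\top B$), and only then read off nonnegativity of the eigenvalues from Theorem~\ref{t2.4}. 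Your route bypasses the Gershgorin machinery entirely and yields the additional structural information $\hat \vx^*\hat L(\hat H)\hat \vx = \sum_{\{i,j\}\in E}\bigl(\hat x_i - \hat h_{ij}\hat x_j\bigr)^*\bigl(\hat x_i - \hat h_{ij}\hat x_j\bigr)$, which is useful for analysing the kernel; the paper's route is shorter and serves to illustrate the Gershgorin theorem just proved in the same section. Both arguments rely essentially on the hypothesis that each nonzero $\hat h_{ij}$ is a \emph{unit} dual quaternion: you need $\hat h_{ij}^*\hat h_{ij}=1$ on the diagonal of $\hat B^*\hat B$, while the paper needs $|\hat h_{ij}|=1$ for the Gershgorin radius to equal $d(i)$.
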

 \begin{proof} By the definition of dual quaternion Laplacian matrices and Theorem \ref{Gershgorin}, all the eigenvalues of a dual quaternion Laplacian matrix are nonnegative dual numbers.   By Theorem \ref{t2.4}, this matrix is positive semi-definite.
 \end{proof}

 \section{Formation Control of Multiple Rigid-Bodies}

The objective of this study is to design a universal control scheme without requiring decoupling rotational and translational dynamics to make all the connected rigid-bodies rendezvous, i.e., converge into a predefined or unspecified configuration based upon error dynamics with perfect and instantaneous measurements.   Assume that there are $n$ rigid bodies as described in the last section.   The positions of these $n$ rigid bodies are denoted by a dual quaternion vector $\hat \vz = (\hat z_1, \cdots, \hat z_n)^\top \in {\hat {\mathbb Q}^n}$.  Each rigid body $i$ is assumed to have an onboard sensor to measure relative positions of neighboring rigid bodies, that is $\hat z_{ij} \in \hat {\mathbb Q}$ when $(i, j) \in E$ for the mutual visibility graph $G = (V, E)$.   Here, $\hat z_{i,j}$ may be the relative configuration $\hat q_{ij}$, the logarithm of the relative configuration $\ln \hat q_{ij}$, the relative twist $\hat \xi_{ij}^b$, or simply
$$\hat z_{ij} = \hat z_j - \hat z_i.$$

Assume that each rigid body $i$ has a point kinematic model given by the single integrator
\begin{equation}
{{d \hat z_i} \over dt} = \hat u_i,
\end{equation}
where $\hat u_i \in \mathbb Q$ represents the velocity control input.   Let $\hat \vv = (\hat v_1, \cdots, \hat v_n)^\top \in {\hat {\mathbb Q}^n}$ be the targeted formation configuration, satisfying $\hat L \hat \vv = 0$.    Then we consider the
control law
\begin{equation} \label{control}
\hat u_i= \sum_{(i, j) \in E} \hat l_{ij}(\hat z_j - \hat z_i),
\end{equation}
where $\hat l_{ij}$ are the entries of the Laplacian matrix $\hat L$, for $i = 1, \cdots, n$.

Under the control law (\ref{control}),
the overall closed-loop dynamics of the $n$ rigid bodies becomes
\begin{equation}
{{d \hat \vz} \over dt} = -\hat L\hat  \vz.
\end{equation}

Then, as studied in \cite{LWHF14}, if $\hat L$  is 
positive semi-definite, and its eigenvalues are positive except at one zero eigenvalue, the system is stable.  Hence, Theorem \ref{PSD} paves a way for such a study.

\bigskip




\end{document}